\documentclass[11pt]{article}
\usepackage{amsmath,amssymb,amsthm}
\usepackage{graphicx,subfigure,float,url}
\usepackage{pdfsync}
\usepackage[english]{babel}
\usepackage[utf8]{inputenc}
\usepackage{enumitem}
\usepackage{fancybox}
\usepackage{listings}
\usepackage{mathtools}
\usepackage{amsfonts}
\usepackage[bottom]{footmisc}
\usepackage{verbatim}

\usepackage{float}
\usepackage{makeidx}


\setlength{\textwidth}{16cm}

\setlength{\textheight}{22.5cm}

\setlength{\topmargin}{-1.5cm}

\setlength{\oddsidemargin}{-1mm}

\setlength{\evensidemargin}{-1mm}

\setlength{\abovedisplayskip}{3mm}

\setlength{\belowdisplayskip}{3mm}

\setlength{\abovedisplayshortskip}{0mm}

\setlength{\belowdisplayshortskip}{2mm}

\setlength{\baselineskip}{12pt}

\setlength{\normalbaselineskip}{12pt}

\normalbaselines

\pagestyle{plain}

\raggedbottom

\newtheorem{Theorem}{Theorem}[section]

\newtheorem{Lemma}[Theorem]{Lemma}

\newtheorem{Corollary}[Theorem]{Corollary}

\newtheorem{Remark}[Theorem]{Remark}

\newcommand{\RR}{{{\rm I} \kern -.15em {\rm R} }}

\newcommand{\C}{{{\rm l} \kern -.42em {\rm C} }}

\newcommand{\nat}{{{\rm I} \kern -.15em {\rm N} }}

\newcommand{\be}{\begin{equation}}
\newcommand{\ee}{\end{equation}}
\newcommand{\beq}{\begin{eqnarray}}
\newcommand{\eeq}{\end{eqnarray}}
\newcommand{\beqs}{\begin{eqnarray*}}
\newcommand{\eeqs}{\end{eqnarray*}}
\newcommand{\bt}{\begin{Theorem}}
\newcommand{\et}{\end{Theorem}}
\newcommand{\br}{\begin{Remark}}
\newcommand{\er}{\end{Remark}}
\newcommand{\bc}{\begin{Corollary}}
\newcommand{\ec}{\end{Corollary}}
\newcommand{\bl}{\begin{Lemma}}
\newcommand{\el}{\end{Lemma}}
\newcommand{\bd}{\begin{definition}}
\newcommand{\ed}{\end{definition}}

\renewcommand{\geq}{\geqslant}
\renewcommand{\ge}{\geqslant}
\renewcommand{\leq}{\leqslant}
\renewcommand{\le}{\leqslant}

\title{Exponential decay for semilinear wave equations with viscoelastic damping and delay feedback}
\author{
Alessandro Paolucci\footnote{Dipartimento di Ingegneria e Scienze dell'Informazione e Matematica, Universit\`{a} di L'Aquila, Via Vetoio, Loc. Coppito, 67010 L'Aquila Italy (\texttt{alessandro.paolucci2@graduate.univaq.it}).}
\and
Cristina Pignotti\footnote{Dipartimento di Ingegneria e Scienze dell'Informazione e Matematica, Universit\`{a} di L'Aquila, Via Vetoio, Loc. Coppito, 67010 L'Aquila Italy (\texttt{pignotti@univaq.it}).}
}

\date{}

\begin{document}

\textwidth=160 mm

\textheight=225mm

\parindent=8mm

\frenchspacing

\maketitle

\begin{abstract}
In this paper we study a class of semilinear wave type equations with viscoelastic damping and delay feedback with time variable coefficient.
By combining semigroup arguments, careful energy estimates and an iterative approach we are able to prove, under suitable assumptions, a well-posedness result and an exponential decay estimate for solutions corresponding to {\em small} initial data. This extends and concludes the analysis initiated in \cite{JEE15} and then developed in \cite{KP,JEE18}.
\end{abstract}

\vspace{5 mm}

\def\qed{\hbox{\hskip 6pt\vrule width6pt
height7pt
depth1pt  \hskip1pt}\bigskip}



\section{Introduction}
\label{pbform}

\setcounter{equation}{0}

Let $H$ be a Hilbert space and let $A$ be a positive self-adjoint operator with dense domain $D(A)$ in $H$ and compact inverse in $H$. Let us consider the system:
\begin{equation}
\label{equazione_generale}
\begin{array}{l}
\displaystyle{ u_{tt}(t)+Au(t)- \int_0^{+\infty} \mu (s)Au(t-s) ds+k(t)BB^*u_t(t-\tau)=\nabla \psi( u(t)), \ \ t\in (0,+\infty)}\\
\displaystyle{
 u(t)=u_0(t), \quad t\in (-\infty, 0],}\\
\displaystyle{ u_t(0)=u_1}\\
\displaystyle{ B^*u_t(t)=g(t), \quad t\in (-\tau,0),}
\end{array}
\end{equation}
where $\tau>0$ represents the time delay, $B$ is a bounded linear operator of $H$ into itself, $B^*$ denotes its adjoint, and $(u_0(\cdot), u_1, g(\cdot))$ are the initial data taken in suitable spaces.
Moreover, the delay damping coefficient  $k:[0,+\infty)\rightarrow \RR$ is a function in $L^1_{loc} ([0,+\infty))$ such that
\begin{equation}\label{Cstar}
\int_{t-\tau}^t |k(s)| ds < C^*, \quad \forall t\in (0,+\infty),
\end{equation}
for a suitable constant $C^*,$
and  the memory kernel $\mu:[0,+\infty) \rightarrow [0,+\infty)$ satisfies the following assumptions:
\begin{enumerate}[label=(\roman*)]
\item $\mu \in C^1(\RR^+) \cap L^1(\RR^+)$;
\item $\mu(0)=\mu_{0}>0$;
\item $\int_0^{+\infty} \mu(t)dt=\tilde{\mu}<1$;
\item $\mu'(t)\leq -\delta \mu(t)$, for some $\delta>0$.
\end{enumerate}

Furthermore, $\psi : D(A^{\frac 1 2})\rightarrow \RR$ is a functional having G\^{a}teaux derivative $D\psi(u)$ at every $u\in D(A^{\frac 12}).$ Moreover, in the spirit of  \cite{ACS}, we assume the following hypotheses:
\begin{itemize}
\item[{(H1)}] For every $u\in D(A^{\frac 1 2})$, there exists a constant $c(u)>0$ such that
$$
|D\psi(u)(v)|\leq c(u) ||v||_{{H}} \qquad \forall v\in {D}(A^{\frac 1 2}).
$$
Then, $\psi$ can be extended to the whole  $H$ and
 we denote by $\nabla \psi(u)$ the unique vector representing $D\psi(u)$ in the Riesz isomorphism, i.e.
$$
\langle \nabla \psi(u), v \rangle_H =D\psi(u) (v), \qquad \forall v\in H;
$$
\item[ (H2)] for all $r>0$ there exists a constant $L(r)>0$ such that
$$
||\nabla \psi (u)-\nabla \psi (v)||_H \leq L(r) ||A^{\frac 12}(u-v)||_H,
$$
for all $u,v\in {D}(A^{\frac 12})$ satisfying $||A^{\frac 12} u||_H\leq r$ and $||A^{\frac 12} v||_H\leq r$.
\item[{ (H3)}] $\psi(0)=0,$  $\nabla \psi(0)=0$ and
there exists a strictly increasing continuous function $h$ such that
\begin{equation}
\label{stima_h}
||\nabla \psi (u)||_H\leq h(||A^{\frac 12} u||_H)||A^{\frac 12}u||_H,
\end{equation}
for all $u\in {D}(A^{\frac 12})$.
\end{itemize}
We are interested in studying well--posedness and stability results, for small initial data, for the above model.
Our results extend the ones of \cite{JEE15, JEE18} where abstract evolution equations are analyzed and, in the specific case of memory damping, exponential decay is obtained essentially only in the linear case. Indeed, in the nonlinear setting, an extra standard frictional damping, not delayed, was needed in order to obtain existence and uniqueness of global solutions with exponentially decaying energy for suitably small initial data. Moreover, in \cite{JEE15, JEE18} the delay damping coefficient $k(t)$ is assumed to be constant and the results there obtained require a smallness assumption on $\Vert k\Vert_\infty.$ The analysis of \cite{JEE15, JEE18} has been extended in \cite{KP} by considering a time variable delay damping coefficient $k(t)$ as in the present paper. However, also in \cite{KP} an extra frictional not delayed damping was needed, in the case of wave type equation with memory damping, when a locally Lipschitz continuous nonlinear term is included into the equation.

Then, here, we focus on wave type equations with viscoelastic damping, delay feedback and source term,  obtaining well-posedness and stability results for small initial data without adding extra frictional not delayed damping. So, we here improve and conclude the analysis developed in \cite{JEE15, JEE18, KP} for the class of models at hand.
Other models with viscoelastic damping and time delay are studied in recent literature. The first result is due to \cite{KSH}, in the linear setting. In that paper a standard frictional damping, not delayed, is included into the model in order to compensate the destabilizing effect of the delay feedback. Actually, at least in the linear case, the viscoelastic damping alone can counter the destabilizing delay effect, under suitable assumptions, without needing other dampings. This has been shown, e.g., in \cite{Guesmia, ANP, Dai, Yang}. The case of viscoelastic  wave equation with intermittent delay feedback has been studied in \cite{P} while \cite{MK} deals with a model for plate equation with memory, source term, delay feedback and standard not delayed frictional damping.

More extended is the literature in case of a frictional/structural damping, instead of a viscoelastic term, which compensates the destabilizing effect of a time delay and, for specific models, mainly in the linear setting, several stability results have been quite recently obtained under appropriate assumptions (see e.g. \cite{AABM, AG, AM, C, NP, OO, SS, XYL}).

The paper is organized as follows. In Section 2 we give some preliminaries, writing system \eqref{equazione_generale} in an abstract way. In Section 3 we prove the exponential decay of the energy associated to \eqref{equazione_generale}. Finally, in Section 4 some examples are illustrated.

\section{Preliminaries}

As in Dafermos \cite{Dafermos}, we define the function
\begin{equation}
\label{eta}
\eta^t(s):=u(t)-u(t-s),\quad s,t\in (0,+\infty),
\end{equation}
so that we can rewrite \eqref{equazione_generale} in the following way:
\begin{equation}
\label{equazione_riscritta}
\begin{array}{l}
\displaystyle{u_{tt}(t)+(1-\tilde{\mu})Au(t)+\int_0^{+\infty} \mu (s)A\eta^t(s)ds+k(t)BB^*u_t(t-\tau)}\\
\displaystyle{\hspace{7 cm}
=\nabla \psi(u(t)),\ t\in(0,+\infty),}\\
\displaystyle{ \eta^t_t(s)=-\eta^t_s(s)+u_t(t),\ \quad t, s\in  (0, +\infty),}\\
\displaystyle{ u(0)=u_0(0),}\\
\displaystyle{ u_t(0)=u_1,}\\
\displaystyle{B^*u_t(t)= g(t), \quad t\in (-\tau, 0),}\\
\displaystyle{ \eta^0(s)=\eta_0(s)=u_0(0)-u_0(-s)  \quad s\in (0,+\infty).}
\end{array}
\end{equation}

Let $L^2_\mu ((0,+\infty); D(A^{\frac 12}))$ be the Hilbert space of the $D(A^{\frac 12})-$valued functions in $(0,+\infty)$ endowed with the scalar product

$$\langle \varphi, \psi \rangle_{L^2_\mu ((0,+\infty); D(A^{\frac 12}))}=\int_0^\infty \mu (s)\langle A^{\frac 12}\varphi, A^{\frac 12}\psi \rangle_H ds$$
and denote by ${\mathcal H }$ the Hilbert space
$$
\mathcal{H}=D(A^{\frac 12})\times H\times  L^2_{\mu} ((0,+\infty);D(A^{\frac 12})),
$$
equipped with the inner product

\begin{equation}\label{inner}
\left\langle
\left (
\begin{array}{l}
u\\
v\\
w
\end{array}
\right ),
\left (
\begin{array}{l}
\tilde u\\
\tilde v\\
\tilde w
\end{array}
\right )
\right\rangle_{\mathcal{H}}:= (1-\tilde\mu ) \langle A^{\frac 12}u, A^{\frac 12} \tilde u\rangle_H+\langle v, \tilde v\rangle_H+\int_0^\infty \mu (s)\langle  A^{\frac 12}w,  A^{\frac 12} \tilde w\rangle_H ds.
\end{equation}
Setting $U= (u, u_t, \eta^t)$ we can restate \eqref{equazione_generale} in the abstract form
\begin{equation}
\label{forma_astratta2}
\begin{array}{l}
\displaystyle{ U'(t)= \mathcal{A} U(t)-k(t)\mathcal{B}U(t-\tau)+F(U(t)),}\\
\displaystyle{ \mathcal{B}U(t-\tau)=\tilde g(t) \quad \text{for} \quad t\in [0,\tau],}\\
\displaystyle{ U(0)=U_0,}
\end{array}
\end{equation}
where the operator ${\mathcal A}$  is defined by
\begin{equation*}
\mathcal{A} \begin{pmatrix}
u\\
v\\
w
\end{pmatrix}
=
\begin{pmatrix}
v\\
-(1-\tilde{\mu})Au-\int_0^{+\infty} \mu(s) A w(s) ds \\
-w_s +v
\end{pmatrix}
\end{equation*}
with domain
\begin{equation}
\begin{array}{c}
\displaystyle{ {D}(\mathcal{A})= \{ (u,v,w) \in D(A^{\frac 12})\times D(A^{\frac 12})\times L_{\mu}^2 ((0,+\infty); D(A^{\frac 12})):}\hspace{1,5 cm}\\
\hspace{1.5 cm}
\displaystyle{(1-\tilde{\mu})u+\int_0^{+\infty} \mu (s) w(s)ds \in D(A), \quad w_s \in L^2_{\mu} ((0,+\infty); D(A^{\frac 12}))\},}
\end{array}
\end{equation}
in the Hilbert space ${\mathcal H},$ and the operator ${\mathcal B}:{\mathcal H}\rightarrow {\mathcal H}$ is defined by
$${\mathcal B}\left (
\begin{array}{l}
u\\
v\\
w
\end{array}
\right ):= \left(
\begin{array}{l}
0\\
 BB^* v\\
 0
\end{array}
\right ).$$
Moreover,  $\tilde g(t)=(0, Bg(t-\tau), 0),$ $U_0=(u_0(0), u_1, \eta_0)$ and  $F(U):= (0, \nabla\psi(u), 0)^T.$ From (H2) and (H3) we deduce that
the function $F$ satisfies:
\begin{itemize}
\item[{(F1)}] $F(0)=0$;
\item[{(F2)}] for each $r>0$ there exists a constant $L(r)>0$ such that
\begin{equation}
||F(U)-F(V)||_{\mathcal{H}}\leq L(r)||U-V||_{\mathcal{H}}
\end{equation}
whenever $||U||_{\mathcal{H}}\leq r$ and $||V||_{\mathcal{H}}\leq r$.
\end{itemize}

\section{Stability result}
In this section we want to prove an exponential stability result for the system \eqref{equazione_generale} for {\em small} initial data.
It's well--known (see e.g. \cite{Giorgi}) that the operator ${\mathcal A}$
in the problem's formulation \eqref{forma_astratta2} generates an exponentially stable semigroup
$\{S(t)\}_{t\geq0},$ namely there exist two costants
$M,\omega >0$ such that
\begin{equation}
\label{semigruppo}
||S(t)||_{\mathcal{L}(H)}\leq Me^{-\omega t}.
\end{equation}
Denoting
\begin{equation}\label{b}
\Vert B\Vert_{\mathcal{L}(H)}=\Vert B^*\Vert_{\mathcal{L}(H)}=b,
\end{equation}
then $\Vert {\mathcal B}\Vert_{\mathcal{L(H)}} =b^2.$
Our result will be obtained under an assumption on the coefficient $k(t)$ of the delay feedback which includes as particular cases $k$ integrable and $k$ in $L^\infty$ with $\Vert k\Vert_\infty$ sufficiently small.
More precisely, we assume (cf. \cite{KP}) that
there exist two constants $\omega '\in [0,\omega)$ and $\gamma\in \RR$ such that
\begin{equation}
\label{ipotesi2}
b^2Me^{\omega\tau} \int_0^t  |k(s+\tau)| ds \leq \gamma +\omega 't, \quad \mbox{\rm for all}\ t\ge 0.
\end{equation}

\begin{Theorem}\label{generaleCV}
Assume \eqref{ipotesi2}. Moreover, suppose that
\begin{itemize}
\item[{(W)}] there exist $\rho>0$, $C_\rho>0$,  with $L(C_\rho)<\frac{\omega-\omega '}{M}$ such that if $U_0 \in \mathcal{H}$ and if $\tilde g\in C([0,\tau];\mathcal{H})$ satisfy
\begin{equation}\label{well-posedness}
||U_0||^2_{\mathcal{H}}+\int_0^\tau |k(s)| \cdot ||\tilde{g}(s)||^2_{\mathcal{H}} ds <\rho ^2,
\end{equation}
then the system \eqref{forma_astratta2} has a unique solution $U\in C([0,+\infty);\mathcal{H})$ satisfying $||U(t)||_{\mathcal{H}}\leq C_\rho$ for all $t>0$.
\end{itemize}
Then, for every solution $U$ of \eqref{forma_astratta2}, with initial datum $U_0$ satisfying \eqref{well-posedness},
\begin{equation}
\label{stimaesponenziale}
||U(t)||_{\mathcal H}\leq \tilde{M}\left (||U_0||_{\mathcal H}+\int_0^\tau e^{\omega s } |k(s)|\cdot ||\tilde g(s)||_{\mathcal{H}} ds\right )e^{-(\omega -\omega'-ML(C_{\rho}))t}, \quad t\ge 0,
\end{equation}
with $\tilde M= M e^{\gamma}.$
\end{Theorem}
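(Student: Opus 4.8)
The plan is to represent the solution through the variation of constants (Duhamel) formula associated with the semigroup $\{S(t)\}_{t\ge0}$ generated by $\mathcal{A}$, and then to close a Gronwall-type estimate in which the weighted delay integral is controlled precisely by hypothesis \eqref{ipotesi2}. First I would write, for the mild solution furnished by (W),
\begin{equation*}
U(t)=S(t)U_0-\int_0^t k(s)\,S(t-s)\mathcal{B}U(s-\tau)\,ds+\int_0^t S(t-s)F(U(s))\,ds,
\end{equation*}
take $\mathcal{H}$-norms and use \eqref{semigruppo} to obtain
\begin{equation*}
\Vert U(t)\Vert_{\mathcal H}\le Me^{-\omega t}\Vert U_0\Vert_{\mathcal H}+M\int_0^t e^{-\omega(t-s)}|k(s)|\,\Vert\mathcal{B}U(s-\tau)\Vert_{\mathcal H}\,ds+M\int_0^t e^{-\omega(t-s)}\Vert F(U(s))\Vert_{\mathcal H}\,ds.
\end{equation*}
Multiplying by $e^{\omega t}$ and setting $y(t):=e^{\omega t}\Vert U(t)\Vert_{\mathcal H}$, which is continuous and, by (W), bounded on $[0,\infty)$, turns the weights $e^{-\omega(t-s)}$ into $e^{\omega s}$ and prepares an integral inequality for $y$.

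Next I would estimate the two integral terms. Since the solution stays in the ball $\Vert U\Vert_{\mathcal H}\le C_\rho$, properties (F1)--(F2) give $\Vert F(U(s))\Vert_{\mathcal H}\le L(C_\rho)\Vert U(s)\Vert_{\mathcal H}$, so the nonlinear contribution becomes $ML(C_\rho)\int_0^t y(s)\,ds$. For the delay term I split the integral at $s=\tau$: on $[0,\tau]$ the boundary condition $\mathcal{B}U(s-\tau)=\tilde g(s)$ produces the data term $M\int_0^\tau e^{\omega s}|k(s)|\,\Vert\tilde g(s)\Vert_{\mathcal H}\,ds$, while for $s>\tau$ the bound $\Vert\mathcal{B}U(s-\tau)\Vert_{\mathcal H}\le b^2\Vert U(s-\tau)\Vert_{\mathcal H}$ together with the shift $\sigma=s-\tau$ yields $Mb^2e^{\omega\tau}\int_0^{(t-\tau)_+}|k(\sigma+\tau)|\,y(\sigma)\,d\sigma$, where $(t-\tau)_+=\max(t-\tau,0)$. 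Collecting everything, setting
\begin{equation*}
C_0:=M\Bigl(\Vert U_0\Vert_{\mathcal H}+\int_0^\tau e^{\omega s}|k(s)|\,\Vert\tilde g(s)\Vert_{\mathcal H}\,ds\Bigr),
\end{equation*}
and enlarging the upper limit of the delay integral from $(t-\tau)_+$ to $t$ (legitimate since the integrand is nonnegative), I arrive at
\begin{equation*}
y(t)\le C_0+\int_0^t\bigl[Mb^2e^{\omega\tau}|k(\sigma+\tau)|+ML(C_\rho)\bigr]y(\sigma)\,d\sigma.
\end{equation*}

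Finally I would apply the generalized Gronwall inequality with constant datum $C_0$ and locally integrable kernel $\beta(\sigma)=Mb^2e^{\omega\tau}|k(\sigma+\tau)|+ML(C_\rho)$ (integrability of the $k$-part coming from $k\in L^1_{loc}$), which gives $y(t)\le C_0\exp\bigl(\int_0^t\beta(\sigma)\,d\sigma\bigr)$. Here hypothesis \eqref{ipotesi2} is exactly what is needed: it bounds $Mb^2e^{\omega\tau}\int_0^t|k(\sigma+\tau)|\,d\sigma\le\gamma+\omega't$, so that $\int_0^t\beta(\sigma)\,d\sigma\le\gamma+(\omega'+ML(C_\rho))t$ and hence $y(t)\le C_0e^{\gamma}e^{(\omega'+ML(C_\rho))t}$. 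Recalling $y(t)=e^{\omega t}\Vert U(t)\Vert_{\mathcal H}$ and $\tilde M=Me^{\gamma}$ then produces exactly \eqref{stimaesponenziale}, the decay exponent $\omega-\omega'-ML(C_\rho)$ being strictly positive precisely because (W) imposes $L(C_\rho)<(\omega-\omega')/M$. The main obstacle is the delay term: one must cleanly separate the prescribed history $\tilde g$ on $[0,\tau]$ from the genuine feedback for $s>\tau$, carry out the time shift, and recognize that the cumulative weight $\int_0^t|k(\sigma+\tau)|\,d\sigma$ — which could a priori force exponential blow-up in Gronwall — is exactly the quantity tamed by \eqref{ipotesi2}, so that it only contributes the subcritical rate $\omega'<\omega$ and the stabilizing $e^{-\omega t}$ from the semigroup survives.
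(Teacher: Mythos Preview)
Your proposal is correct and follows essentially the same route as the paper: Duhamel formula, split the delay integral at $s=\tau$ to separate the prescribed history $\tilde g$ from the feedback, shift $\sigma=s-\tau$ in the latter, apply Gronwall to $e^{\omega t}\Vert U(t)\Vert_{\mathcal H}$, and invoke \eqref{ipotesi2} to control the accumulated kernel. One small slip: you say $y(t)=e^{\omega t}\Vert U(t)\Vert_{\mathcal H}$ is bounded on $[0,\infty)$ by (W), but (W) only bounds $\Vert U(t)\Vert_{\mathcal H}$, not $y$; this is harmless since Gronwall only requires continuity and local integrability, which you have.
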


\begin{proof}
By Duhamel's formula, using \eqref{semigruppo}, we have
\begin{equation*}
\begin{array}{l}
\displaystyle{ ||U(t)||_{\mathcal H}\leq Me^{-\omega t}||U_0||_{\mathcal H}+Me^{-\omega t} \int_0^t e^{\omega s} |k(s)|\cdot ||\mathcal{B}U(s-\tau)||_{\mathcal H}ds +ML(C_{\rho})e^{-\omega t} \int_0^t e^{\omega s} ||U(s)||_{\mathcal H} ds } \\
\hspace{1.5 cm}
\displaystyle{ \leq Me^{-\omega t}||U_0||_{\mathcal H}+Me^{-\omega t} \int_0^\tau e^{\omega s} |k(s)|\cdot ||\mathcal{B} U(s-\tau)||_{\mathcal H} ds }\\
\hspace{1.7 cm}
\displaystyle{ + Me^{-\omega t} \int_\tau ^t e^{\omega s} |k(s)|\cdot ||\mathcal{B} U(s-\tau)||_{\mathcal H} ds + ML(C_{\rho})e^{-\omega t} \int_0^t e^{\omega s} ||U(s)||_{\mathcal H} ds. }
\end{array}
\end{equation*}
Hence, we obtain
$$
\begin{array}{l}
\displaystyle{ e^{\omega t} ||U(t)||_{\mathcal H} \leq
M||U_0||_{\mathcal H}+M\int_0^\tau e^{\omega s } |k(s)|\cdot ||{\mathcal{B}}U(s-\tau )||_{\mathcal H} ds }\\
\hspace{2 cm}
\displaystyle{ +\int_0^t \Big ( Me^{\omega \tau} |k(s+\tau)| \cdot ||\mathcal{B}||_{{\mathcal{L}}(H)} +ML(C_{\rho}) \Big )
e^{\omega s}
||U(s)||_{\mathcal H} ds,}
\end{array}
$$
and then
$$
\begin{array}{l}
\displaystyle{ e^{\omega t} ||U(t)||_{\mathcal H} \leq M||U_0||_{\mathcal H}+M\int_0^\tau e^{\omega s } |k(s)|\cdot ||\tilde g(s)||_{\mathcal H} ds }\\
\hspace{2 cm}
\displaystyle{ +\int_0^t \Big ( M b^2 e^{\omega \tau} |k(s+\tau)| +ML(C_{\rho}) \Big ) e^{\omega s}||U(s)||_{\mathcal H} ds.}
\end{array}
$$
Therefore, using Gronwall's inequality,
$$
e^{\omega t}
\Vert U(t)\Vert_{\mathcal H}\le
M\left (
\Vert U_0\Vert_{\mathcal H} +\int_0^{\tau} e^{\omega s } \vert k(s)\vert\cdot \Vert \tilde g(s)\Vert_{\mathcal H} ds
\right )
 e^{
M b^2 e^{\omega\tau}\int_0^t \vert k(s+\tau )\vert ds + ML(C_\rho )t}
$$
and so, from \eqref{ipotesi2},
$$
||U(t)||_{\mathcal H} \leq Me^{\gamma}\left (
||U_0||_{\mathcal H}+\int_0^\tau
e^{\omega s } |k(s)|\cdot ||\tilde g(s)||_{\mathcal H} ds
\right ) e^{- (\omega-\omega^\prime - ML(C_{\rho} ))t}.
$$
This gives  \eqref{stimaesponenziale} with $\tilde M$ as in the statement.
\end{proof}

In order to prove the stability result we need then to show that the well-posedness assumption (W) of Theorem \ref{generaleCV} is satisfied for problem
 \eqref{equazione_generale}.
For this, let us define  the energy of the model \eqref{equazione_generale} as
\begin{equation}
\label{energia}
\begin{array}{l}
\displaystyle{E(t):=E (u(t))=\frac{1}{2}||u_t(t)||_H^2+\frac{1-\tilde{\mu}}{2}||A^{\frac 12}u(t)||^2_H-\psi(u)  }\\ \hspace{2 cm}
\displaystyle{ +\frac{1}{2}\int_{t-\tau}^t |k(s+\tau)|\cdot ||B^*u_t(s)||_H^2 ds+\frac{1}{2}\int_0^{+\infty} \mu(s) ||A^{\frac 1 2} \eta^t(s)||^2_H ds.}
 \end{array}
\end{equation}

The following lemma holds.
\begin{Lemma}\label{stimaE}
Let $u:[0,T)\rightarrow \RR$ be a solution of \eqref{equazione_generale}.
Assume that $E(t)\geq \frac{1}{4}||u_t(t)||_H^2$ for all $t\in [0,T)$. Then,
\begin{equation}
\label{disuguaglianza energia}
E(t)\leq \bar{C}(t)E(0),
\end{equation}
for all $t\in [0,T)$, where
\begin{equation}\label{Cbar}
\bar{C}(t)=e^{2b^2\int_0^t ( |k(s)|+|k(s+\tau)|)  ds}.
\end{equation}
\end{Lemma}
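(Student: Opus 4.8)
The plan is to differentiate the energy $E(t)$ along the solution, use the rewritten system \eqref{equazione_riscritta} to eliminate $u_{tt}$, and exploit the structural cancellations together with the sign of $\mu'$ to reduce everything to a Gronwall inequality.

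First I would compute $E'(t)$ term by term from \eqref{energia}. Differentiating the kinetic, elastic and potential parts gives $\langle u_t, u_{tt}\rangle_H + (1-\tilde\mu)\langle Au, u_t\rangle_H - \langle\nabla\psi(u), u_t\rangle_H$, the delay integral contributes the boundary terms $\tfrac12|k(t+\tau)|\,\|B^*u_t(t)\|_H^2 - \tfrac12|k(t)|\,\|B^*u_t(t-\tau)\|_H^2$, and the memory integral contributes $\int_0^\infty \mu(s)\langle A^{\frac12}\eta^t, A^{\frac12}\eta^t_t\rangle_H\,ds$. Substituting $u_{tt}$ from the first equation of \eqref{equazione_riscritta} and $\eta^t_t = -\eta^t_s + u_t$ from the second, the elastic terms cancel against $(1-\tilde\mu)\langle Au, u_t\rangle_H$, the nonlinear terms cancel against $-\langle\nabla\psi(u), u_t\rangle_H$, and the two memory cross terms $\mp\int_0^\infty \mu(s)\langle A^{\frac12}u_t, A^{\frac12}\eta^t\rangle_H\,ds$ cancel as well. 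Integrating $-\int_0^\infty \mu(s)\langle A^{\frac12}\eta^t, A^{\frac12}\eta^t_s\rangle_H\,ds$ by parts in $s$, and using $\eta^t(0)=0$ together with the decay of $\mu$ to annihilate the boundary contributions, this reduces to $\tfrac12\int_0^\infty \mu'(s)\|A^{\frac12}\eta^t(s)\|_H^2\,ds$, which is nonpositive by assumption (iv). The upshot is the identity
\begin{equation*}
E'(t) = \tfrac12\int_0^\infty \mu'(s)\|A^{\frac12}\eta^t(s)\|_H^2\,ds - k(t)\langle B^*u_t(t), B^*u_t(t-\tau)\rangle_H + \tfrac12|k(t+\tau)|\,\|B^*u_t(t)\|_H^2 - \tfrac12|k(t)|\,\|B^*u_t(t-\tau)\|_H^2.
\end{equation*}

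Next I would discard the nonpositive memory term and estimate the delay cross term by Young's inequality, $|k(t)|\,|\langle B^*u_t(t), B^*u_t(t-\tau)\rangle_H| \le \tfrac12|k(t)|\,\|B^*u_t(t)\|_H^2 + \tfrac12|k(t)|\,\|B^*u_t(t-\tau)\|_H^2$. The term in $\|B^*u_t(t-\tau)\|_H^2$ then cancels exactly against the negative boundary term produced by the delay integral in the energy — this is precisely why that integral was inserted into \eqref{energia} — leaving $E'(t) \le \tfrac12\big(|k(t)|+|k(t+\tau)|\big)\|B^*u_t(t)\|_H^2$. Bounding $\|B^*u_t(t)\|_H^2 \le b^2\|u_t(t)\|_H^2$ via \eqref{b} and invoking the standing hypothesis $\|u_t(t)\|_H^2 \le 4E(t)$ gives $E'(t) \le 2b^2\big(|k(t)|+|k(t+\tau)|\big)E(t)$.

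Finally, Gronwall's inequality yields $E(t)\le E(0)\exp\!\big(2b^2\int_0^t(|k(s)|+|k(s+\tau)|)\,ds\big) = \bar C(t)E(0)$, which is \eqref{disuguaglianza energia} with $\bar C$ as in \eqref{Cbar}. I expect the only delicate point to be the differentiation of the energy — specifically the integration by parts in the memory term and the justification that the boundary terms at $s=0$ and $s\to+\infty$ vanish — since the remaining algebra is the designed cancellation between the delay feedback and the auxiliary delay energy.
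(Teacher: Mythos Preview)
Your proposal is correct and follows essentially the same argument as the paper: differentiate $E(t)$, use \eqref{equazione_riscritta} to cancel the elastic, nonlinear and memory cross terms, drop the nonpositive contribution $\tfrac12\int_0^\infty \mu'(s)\Vert A^{1/2}\eta^t\Vert_H^2\,ds$, split the delay cross term so that the $\Vert B^*u_t(t-\tau)\Vert_H^2$ pieces cancel, bound $\Vert B^*u_t\Vert_H^2\le b^2\Vert u_t\Vert_H^2\le 4b^2 E(t)$, and apply Gronwall. The only cosmetic difference is that the paper phrases the splitting of $-k(t)\langle B^*u_t(t),B^*u_t(t-\tau)\rangle_H$ as ``Cauchy--Schwarz'' rather than ``Young,'' which of course yields the same bound.
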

\begin{proof}
Differentiating $E(t)$, we obtain
$$
\begin{array}{l}

\displaystyle{\frac{dE}{dt}=\langle u_t,u_{tt}\rangle_H+(1-\tilde{\mu})\langle A^{\frac 1 2}u,A^{\frac 1 2} u_t \rangle_H -\langle \nabla \psi (u), u_t\rangle_H +\frac{1}{2} |k(t+\tau)|\cdot ||B^*u_t(t)||_H^2 }\\
\hspace{1.5 cm}
\displaystyle{-\frac{1}{2}|k(t)|\cdot ||B^*u_t(t-\tau)||_H^2+\int_0^{+\infty} \mu (s) \langle A^{\frac 12}\eta^t(s),A^{\frac 1 2}\eta^t_t(s) \rangle_H ds.}

\end{array}
$$
Then, from \eqref{equazione_generale},
$$
\begin{array}{l}
\displaystyle{\frac{dE}{dt}= -\int_0^{+\infty} \mu(s)\langle u_t(t),A\eta^t(s)\rangle_H ds -k(t)\langle u_t, BB^*u_t(t-\tau) \rangle_H +\frac{1}{2}|k(t+\tau)|\cdot||B^*u_t(t)||^2}\\
\hspace{2 cm}
\displaystyle{-\frac{1}{2}|k(t)|\cdot||B^*u_t(t-\tau)||^2+\int_0^{+\infty} \mu (s) \langle A\eta^t(s),\eta^t_t(s)\rangle ds.}
\end{array}
$$
Using the second equation of  \eqref{equazione_riscritta}, we have that
$$
\begin{array}{l}
\displaystyle{ \frac{dE}{dt}= -k(t)\langle u_t, BB^* u_t(t-\tau)\rangle _H+\frac{1}{2}|k(t+\tau)|\cdot ||B^*u_t||_H^2-\frac{1}{2}|k(t)|\cdot ||B^*u_t(t-\tau)||_H^2}\\
\hspace{3 cm}
\displaystyle{ -\int_0^{+\infty} \mu(s) \langle A\eta^t(s),\eta _s^t (s) \rangle_H ds.}
\end{array}
$$
Now, we claim that
$$
\int_0^{+\infty} \mu(s) \langle \eta ^t_s, A\eta^t(s) \rangle_H ds \ge 0.
$$
Indeed, integrating by parts and recalling assumption (iv) on $\mu(\cdot),$ we deduce
$$
\int_0^{+\infty} \mu(s) \langle \eta ^t_s, A\eta^t(s) \rangle_H ds = -\frac{1}{2}\int_0^{+\infty} \mu'(s) ||A^{\frac{1}{2}} \eta^t(s)||_H^2 ds\ge 0.$$
Therefore, we have that
$$
\begin{array}{l}
\displaystyle{\frac{dE(t)}{dt} \leq -k(t)\langle B^*u_t,B^*u_t(t-\tau)\rangle_H +\frac{1}{2}|k(t+\tau)| \cdot ||B^*u_t(t)||_H^2-\frac{1}{2}|k(t)|\cdot ||B^*u_t(t-\tau)||_H^2}.
\end{array}
$$
Now, using Cauchy-Schwarz inequality, we obtain the following estimate:
$$
\begin{array}{l}
\displaystyle{ \frac{dE(t)}{dt}\leq \frac{1}{2}(|k(t)|+|k(t+\tau)|)||B^*u_t(t)||_H^2}\\
\hspace{1 cm}
\displaystyle{ \leq \frac{1}{2} (|k(t)|+|k(t+\tau)|) b^2 ||u_t||_H^2}\\
\hspace{1 cm}
\displaystyle{ =2b^2(|k(t)|+|k(t+\tau)|) \frac{1}{4}||u_t||_H^2}\\
\hspace{1 cm}
\displaystyle{ \le 2b^2(|k(t)|+|k(t+\tau)|) E(t)}.
\end{array}
$$
Hence, the Gronwall Lemma
concludes the proof.
\end{proof}
Before proving the well-posedness assumption (W) for solutions to \eqref{forma_astratta2}, we need the following two lemmas.
\begin{Lemma}
\label{lemma1}
Let us consider the system \eqref{forma_astratta2} with initial data $U_0\in \mathcal{H}$ and  $\tilde g\in C([0,\tau]; \mathcal{H}).$ Then, there exists a unique local solution $U(\cdot)$ defined on a time  interval $[0,\delta)$, with $\delta \le\tau$.
\end{Lemma}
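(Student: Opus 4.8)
The plan is to reduce the problem on $[0,\tau]$ to a genuinely non-delayed semilinear Cauchy problem and then solve it by a standard Banach fixed-point (Picard) argument based on the variation-of-constants formula. The key observation is that, since we only seek a solution on an interval $[0,\delta)$ with $\delta\le\tau$, the delay term is \emph{known data}: for $t\in[0,\tau]$ the constraint in \eqref{forma_astratta2} gives $\mathcal{B}U(t-\tau)=\tilde g(t)$, so the evolution equation becomes
$$U'(t)=\mathcal{A}U(t)+F(U(t))-k(t)\tilde g(t),\qquad U(0)=U_0,$$
where $\mathcal{A}$ generates the $C_0$-semigroup $\{S(t)\}_{t\ge0}$ of \eqref{semigruppo}, $F$ is locally Lipschitz by (F1)–(F2), and the forcing $-k(t)\tilde g(t)$ lies in $L^1((0,\tau);\mathcal{H})$ because $k\in L^1_{loc}$ and $\tilde g\in C([0,\tau];\mathcal{H})$.

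I would then look for a \emph{mild} solution, i.e. a fixed point in $X_\delta:=C([0,\delta];\mathcal{H})$ of the map $\Phi$ defined by
$$\Phi U(t)=S(t)U_0-\int_0^t S(t-s)\,k(s)\tilde g(s)\,ds+\int_0^t S(t-s)\,F(U(s))\,ds.$$
Using $\|S(t)\|_{\mathcal{L}(\mathcal{H})}\le M$ on the bounded interval $[0,\tau]$ together with $F(0)=0$, one fixes a radius $r>0$ and checks that $\Phi$ maps the closed ball $\overline{B}_{X_\delta}(0,r)$ into itself: the semigroup term is bounded by $M\|U_0\|_{\mathcal{H}}$, the forcing term by $M\bigl(\sup_{[0,\tau]}\|\tilde g\|_{\mathcal{H}}\bigr)\int_0^\delta|k(s)|\,ds$, and the nonlinear term by $ML(r)\,r\,\delta$ in view of (F2) and $F(0)=0$. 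Here $\int_0^\delta|k(s)|\,ds$ is finite (as $|k|\in L^1([0,\tau])$) and, crucially, tends to $0$ as $\delta\to0^+$ by absolute continuity of the Lebesgue integral, so all three contributions can be made $\le r$ for $\delta$ small.

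For the contraction estimate, note that for $U,V\in\overline{B}_{X_\delta}(0,r)$ the semigroup and the forcing terms are identical and cancel in $\Phi U-\Phi V$; hence only the nonlinear term survives and, by (F2),
$$\|\Phi U-\Phi V\|_{X_\delta}\le ML(r)\,\delta\,\|U-V\|_{X_\delta}.$$
Choosing $\delta\le\tau$ small enough that simultaneously the self-mapping conditions hold and $ML(r)\delta<1$, the Banach fixed-point theorem yields a unique $U\in X_\delta$ with $\Phi U=U$, which is the desired unique local solution on $[0,\delta)$.

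The main obstacle, and the only non-routine point, is the low regularity of the delay coefficient: $k$ is merely in $L^1_{loc}$, so the inhomogeneous term cannot be controlled through $\|k\|_\infty$. I circumvent this by exploiting the integrability of $k$ against the continuous datum $\tilde g$, which makes the forcing contribution vanish as $\delta\to0$; importantly, $k$ does not enter the contraction constant at all, since the latter is governed entirely by the local Lipschitz constant $L(r)$ of $F$. The restriction $\delta\le\tau$ is intrinsic: it is exactly the range on which the delay term is replaced by the prescribed data $\tilde g$, allowing the argument to proceed without any history beyond the initial conditions.
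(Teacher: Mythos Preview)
Your argument is correct and follows exactly the paper's approach: reduce to the undelayed problem $U'(t)=\mathcal{A}U(t)-k(t)\tilde g(t)+F(U(t))$ on $[0,\tau]$, then invoke the classical local existence theory for semilinear evolution equations (the paper simply cites Pazy, whereas you spell out the Picard iteration and the handling of the $L^1_{loc}$ forcing). The only cosmetic point is that the radius $r$ must be chosen first, large enough to dominate the $\delta$-independent term $M\Vert U_0\Vert_{\mathcal H}$, before shrinking $\delta$; this is implicit in your sketch.
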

\begin{proof}
Since $t\in [0,\tau]$, we can rewrite the abstract system \eqref{forma_astratta2} as an undelayed problem:
\begin{eqnarray*}
U'(t)&=& \mathcal{A}U(t)-k(t)\tilde{g}(t)+F(U(t)), \quad t\in (0, \tau),\\
U(0)&=&U_0.
\end{eqnarray*}
Then, we can
apply the classical theory of nonlinear semigroups (see e.g. \cite{Pazy}) obtaining the existence of a unique solution  on a set $[0,\delta)$, with $\delta \le\tau$.
\end{proof}
\begin{Lemma}
\label{Lemma 2}
Let $U(t)=(u(t),u_t(t),\eta^t)$ be a solution to \eqref{forma_astratta2} defined on the interval $[0, \delta),$ with $\delta\le\tau.$ Then,
\begin{enumerate}
\item if $h(||A^\frac{1}{2} u_0(0)||_H)<\frac{1-\tilde{\mu}}{2},$ then $E(0)>0$;
\item if $h(||A^\frac{1}{2} u_0(0)||_H)<\frac{1-\tilde{\mu}}{2}$ and
$
h \left( \frac{2}{(1-\tilde{\mu})^\frac{1}{2}} \bar{C}^\frac{1}{2}(\tau) E^\frac{1}{2}(0) \right) <\frac{1-\tilde{\mu}}{2},
$
with $\bar{C}(\tau)$ defined in  \eqref{Cbar}, then
\begin{equation}
\label{stima E dal basso}
\begin{array}{l}
\displaystyle{ E(t)>\frac{1}{4}||u_t||_H^2+\frac{1-\tilde{\mu}}{4}||A^\frac{1}{2}u||_H^2}\\
\hspace{1 cm}
\displaystyle{ +\frac{1}{4}\int_{t-\tau}^t |k(s+\tau)| \cdot ||B^*u_t(s)||_H^2 ds+\frac{1}{4}\int_0^{+\infty} \mu(s) ||A^\frac{1}{2}\eta^t (s)||_H^2 ds,}
\end{array}
\end{equation}
for all $t\in[0, \delta)$. In particular,
\begin{equation}\label{J2}
E(t)>  \frac 14 \Vert U(t)\Vert_{\mathcal H}^2, \quad \mbox{for all} \ \ t\in [0, \delta).
\end{equation}
\end{enumerate}
\end{Lemma}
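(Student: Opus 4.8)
The plan rests on one preliminary estimate controlling the source potential. Integrating the identity $\frac{d}{ds}\psi(su)=\langle\nabla\psi(su),u\rangle_H$ over $s\in[0,1]$ and using $\psi(0)=0$ together with the growth bound \eqref{stima_h} and the monotonicity of $h$, I would first establish
\begin{equation*}
\psi(u)\le \tfrac12\, h(\|A^{\frac12}u\|_H)\,\|A^{\frac12}u\|_H^2,\qquad u\in D(A^{\frac12}),
\end{equation*}
the factor $\tfrac12$ coming from $\int_0^1 s\,ds$. This is the only place where (H3) enters, and it is precisely what makes the threshold $\frac{1-\tilde\mu}{2}$ appearing in the hypotheses the natural one.

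For part (1), observe that in the energy \eqref{energia} every term except $-\psi(u)$ is manifestly nonnegative. Evaluating at $t=0$ and using the preliminary estimate,
\begin{equation*}
E(0)\ge \frac{1-\tilde\mu}{2}\|A^{\frac12}u_0(0)\|_H^2-\psi(u_0(0))\ge \tfrac12\Big((1-\tilde\mu)-h(\|A^{\frac12}u_0(0)\|_H)\Big)\|A^{\frac12}u_0(0)\|_H^2 ,
\end{equation*}
so the assumption $h(\|A^{\frac12}u_0(0)\|_H)<\frac{1-\tilde\mu}{2}$ makes the bracket strictly positive, giving $E(0)>0$ for nontrivial data (the trivial datum being immediate).

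For part (2), I would first note that \eqref{stima E dal basso} is equivalent to $\psi(u(t))<\frac{1-\tilde\mu}{4}\|A^{\frac12}u(t)\|_H^2$: subtracting the right-hand side of \eqref{stima E dal basso} from $E(t)$ leaves exactly $\frac{1-\tilde\mu}{4}\|A^{\frac12}u\|_H^2-\psi(u)$ together with nonnegative terms. By the preliminary estimate this holds as soon as $h(\|A^{\frac12}u(t)\|_H)<\frac{1-\tilde\mu}{2}$, so the whole of part (2) reduces to proving that last inequality for every $t\in[0,\delta)$. The difficulty—and the main obstacle—is that this bound is circular: to control $\|A^{\frac12}u(t)\|_H$ I want to invoke \eqref{stima E dal basso} and Lemma \ref{stimaE}, yet \eqref{stima E dal basso} is exactly what is being established.

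I would break the loop with a continuity (bootstrap) argument. Set $R:=\frac{2}{(1-\tilde\mu)^{1/2}}\bar{C}^{1/2}(\tau)E^{1/2}(0)$, and suppose for contradiction that the set of $t\in[0,\delta)$ with $h(\|A^{\frac12}u(t)\|_H)\ge\frac{1-\tilde\mu}{2}$ is nonempty; let $t^*$ be its infimum. Since $U\in C([0,\delta);\mathcal H)$, the map $t\mapsto\|A^{\frac12}u(t)\|_H$ is continuous, and the strict inequality at $t=0$ forces $t^*>0$, $h(\|A^{\frac12}u(t^*)\|_H)=\frac{1-\tilde\mu}{2}$, with $h(\|A^{\frac12}u(t)\|_H)<\frac{1-\tilde\mu}{2}$ on $[0,t^*)$. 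On that interval \eqref{stima E dal basso} holds, hence in particular $E(t)\ge\frac14\|u_t(t)\|_H^2$, so Lemma \ref{stimaE} applies on $[0,t^*)$ and gives $E(t)\le\bar{C}(t)E(0)\le\bar{C}(\tau)E(0)$, using that $\bar{C}$ is nondecreasing by \eqref{Cbar} and $t^*\le\delta\le\tau$. Feeding this into \eqref{stima E dal basso} yields $\|A^{\frac12}u(t)\|_H^2<\frac{4}{1-\tilde\mu}\bar{C}(\tau)E(0)=R^2$ on $[0,t^*)$, so $\|A^{\frac12}u(t^*)\|_H\le R$ by continuity; monotonicity of $h$ and the second hypothesis then give $h(\|A^{\frac12}u(t^*)\|_H)\le h(R)<\frac{1-\tilde\mu}{2}$, contradicting $h(\|A^{\frac12}u(t^*)\|_H)=\frac{1-\tilde\mu}{2}$. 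Thus the bound holds on all of $[0,\delta)$, so does \eqref{stima E dal basso}, and \eqref{J2} follows by comparing its right-hand side with the norm \eqref{inner}: their difference is the single nonnegative term $\frac14\int_{t-\tau}^t|k(s+\tau)|\,\|B^*u_t(s)\|_H^2\,ds$.
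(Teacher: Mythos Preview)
Your proof is correct and follows essentially the same approach as the paper's: the same integral estimate $|\psi(u)|\le\tfrac12 h(\|A^{1/2}u\|_H)\|A^{1/2}u\|_H^2$ from (H3), the same lower bound for $E(0)$ in part~(1), and the same continuity/bootstrap contradiction for part~(2) combined with Lemma~\ref{stimaE}. The only cosmetic difference is that the paper sets up the contradiction by taking the supremum of times where \eqref{stima E dal basso} holds and obtaining equality there, whereas you take the infimum of times where $h(\|A^{1/2}u(t)\|_H)\ge\frac{1-\tilde\mu}{2}$; these are two equivalent ways of phrasing the same open--closed argument. One small wording issue: you call \eqref{stima E dal basso} ``equivalent'' to $\psi(u(t))<\frac{1-\tilde\mu}{4}\|A^{1/2}u(t)\|_H^2$, but your own justification (and all that is needed) only shows the latter is \emph{sufficient}, because of the extra nonnegative terms.
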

\begin{proof}
We first deduce by assumption (H3) on $\psi$ that
\begin{equation}
\label{assumptionPsi}
\begin{array}{l}
\displaystyle{|\psi(u)|\leq \int_0^1 |\langle \nabla \psi (su),u\rangle | ds} \\
\hspace{1,15 cm}
\displaystyle{\leq  ||A^\frac{1}{2}u||^2_H \int_0^1 h(s||A^\frac{1}{2}u||_H)s ds\leq \frac{1}{2} h(||A^\frac{1}{2}u||_H)||A^\frac{1}{2}u||^2_H.}
\end{array}
\end{equation}
Hence, under the assumption $h (\Vert A^{\frac 12} u_0(0)\Vert_H) < \frac {1-\tilde \mu} 2,$ we have that
$$
\begin{array}{l}
\displaystyle{ E(0)=\frac{1}{2}||u_1||_H^2+\frac{1-\tilde{\mu}}{2}||A^\frac{1}{2}u_0(0)||_H^2-\psi(u_0(0))+\frac{1}{2}\int_{-\tau}^0 |k(s+\tau)|\cdot ||B^*u_t(s)||^2_H ds}\\
\hspace{1 cm}
\displaystyle{+\frac{1}{2}\int_0^{+\infty} \mu(s)||A^\frac{1}{2}\eta_0(s)||_H^2 ds}\\
\hspace{0,9 cm}
\displaystyle{ \ge\frac{1}{2}||u_1||^2_H+\frac{1-\tilde{\mu}}{4}||A^\frac{1}{2}u_0(0)||^2_H +\frac{1}{2}\int_{-\tau}^0 |k(s+\tau)| \cdot ||B^*u_t(s)||^2_H ds }\\
\hspace{1 cm}
\displaystyle{+\frac{1}{2}\int_0^{+\infty} \mu(s) ||A^\frac{1}{2} \eta_0(s)||^2_H ds>0,}
\end{array}
$$
obtaining $1.$\\
In order to prove the second statement, we argue by contradiction. Let us denote
$$
r:=\sup \{ s\in [0,\delta) : \eqref{stima E dal basso} \quad \text{holds} \quad \forall t\in [0,s)\}.
$$
We suppose by contradiction that $r<\delta$. Then, by continuity, we have
\begin{equation}
\label{continuita}
\begin{array}{l}
\displaystyle{E(r)=\frac{1}{4}||u_t(r)||^2_H+\frac{1-\tilde{\mu}}{4}||A^\frac{1}{2}u(r)||_H^2+\frac{1}{4}\int_{r-\tau}^r |k(s+\tau)| \cdot ||B^*u_t(s)||_H^2 ds}\\
\hspace{2 cm}
\displaystyle{ +\frac{1}{4} \int_0^{+\infty} \mu(s)||A^\frac{1}{2}\eta^r(s)||_H^2 ds.}
\end{array}
\end{equation}
Now, from \eqref{continuita} and Lemma \ref{stimaE} we can infer that
$$
\begin{array}{l}
\displaystyle{ h(||A^\frac{1}{2}u(r)||_H)\leq h\left( \frac{2}{(1-\tilde{\mu})^\frac{1}{2}} E^\frac{1}{2}(r)\right) }\\
\hspace{2,3 cm}
\displaystyle{ <h\left( \frac{2}{(1-\tilde{\mu})^\frac{1}{2}}\bar{C}^\frac{1}{2}(\tau)E^\frac{1}{2}(0)\right) <\frac{1-\tilde{\mu}}{2}.}
\end{array}
$$
Hence, we have that
$$
\begin{array}{l}
\displaystyle{ E(r)=
\frac{1}{2}||u_t(r)||_H^2+\frac{1-\tilde{\mu}}{2}||A^\frac{1}{2}u(r)||_H^2-\psi(u(r))+\frac{1}{2}\int_{r-\tau}^r|k(s+\tau)|\cdot ||B^*u_t(s)||^2_H ds}\\
\hspace{3 cm}
\displaystyle { +\frac{1}{2}\int_0^{+\infty} \mu(s) ||A^\frac{1}{2} \eta^r(s)||_H^2 ds}\\
\hspace{1 cm} \displaystyle{
>\frac{1}{4}||u_t(r)||_H^2+\frac{1-\tilde{\mu}}{4}||A^\frac{1}{2}u(r)||_H^2+\frac{1}{4}\int_{r-\tau}^r|k(s+\tau)| \cdot ||B^*u_t(s)||_H^2 ds}\\
\hspace{3 cm}
\displaystyle{ +\frac{1}{4}\int_0^{+\infty} \mu(s) ||A^\frac{1}{2} \eta^r(s)||_H^2 ds,}
\end{array}
$$
which contradicts the maximality of $r$. Hence, $r=\delta$ and this concludes the proof.
\end{proof}
\begin{Theorem}\label{teorema2.5}
Problem \eqref{forma_astratta2}, with initial data $U_0\in \mathcal{H}$ and  $\tilde g\in C([0,\tau]; \mathcal{H}),$  satisfies the well-posedness assumption (W). Then, for solutions of \eqref{forma_astratta2} corresponding to sufficiently small initial data the exponential decay estimate \eqref{stimaesponenziale} holds.
\end{Theorem}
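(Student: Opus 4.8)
The plan is to verify the well-posedness assumption (W) by a small-data continuation argument and then to quote Theorem \ref{generaleCV} for the decay. First I would fix a radius $C_\rho>0$, \emph{independently of the initial data}, small enough that $L(C_\rho)<\frac{\omega-\omega'}{M}$; this is possible by the continuity of $L$ together with the smallness of the nonlinearity near the origin built into (H2)--(H3) (recall $\nabla\psi(0)=0$), while $\omega-\omega'>0$ is fixed. The point of freezing $C_\rho$ first is that the decay exponent $\omega-\omega'-ML(C_\rho)$ is then strictly positive, and this positivity is the engine of the whole argument.

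On the first interval the delay term is the prescribed datum $\tilde g$, so Lemma \ref{lemma1} produces a unique local solution on some $[0,\delta)$ with $\delta\le\tau$. For $\rho$ small the two hypotheses of Lemma \ref{Lemma 2} hold --- both $\|A^{\frac12}u_0(0)\|_H$ and $E(0)$ are small when the data are small in the norm of \eqref{well-posedness}, and $h$ is continuous --- so Lemma \ref{Lemma 2} gives $E(t)>\frac14\|U(t)\|_{\mathcal H}^2$, in particular $E(t)\ge\frac14\|u_t(t)\|_H^2$. Hence Lemma \ref{stimaE} applies and yields $\|U(t)\|_{\mathcal H}^2<4\bar C(\tau)E(0)$ on $[0,\delta)$. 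Since $E(0)\le c_0\rho^2$, for $\rho$ small this bound lies strictly below $C_\rho^2$, with room to spare; the uniform control rules out blow-up, so the local solution reaches $t=\tau$ and $\|U(t)\|_{\mathcal H}<C_\rho$ on all of $[0,\tau]$.

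The crux is the passage to a global solution keeping the \emph{same} uniform bound. For $t>\tau$ I would continue interval by interval, treating $k(t)\mathcal B U(t-\tau)$ as a known forcing assembled from the already-constructed part of the solution; this produces a maximal solution on some $[0,T_{\max})$. To propagate $\|U(t)\|_{\mathcal H}<C_\rho$ I argue by a barrier: suppose $\|U(\cdot)\|_{\mathcal H}$ first attains $C_\rho$ at a time $T_1<T_{\max}$. On $[0,T_1]$ we have $\|U(s)\|_{\mathcal H}\le C_\rho$, hence $\|F(U(s))\|_{\mathcal H}=\|F(U(s))-F(0)\|_{\mathcal H}\le L(C_\rho)\|U(s)\|_{\mathcal H}$, so the Duhamel--Gronwall computation of Theorem \ref{generaleCV}, performed on $[0,T_1]$ and combined with \eqref{ipotesi2}, gives
\[
\|U(T_1)\|_{\mathcal H}\le Me^{\gamma}\Big(\|U_0\|_{\mathcal H}+\int_0^\tau e^{\omega s}|k(s)|\,\|\tilde g(s)\|_{\mathcal H}\,ds\Big)e^{-(\omega-\omega'-ML(C_\rho))T_1}.
\]
As the exponent is positive, the right-hand side is at most $P:=Me^{\gamma}\big(\|U_0\|_{\mathcal H}+\int_0^\tau e^{\omega s}|k(s)|\,\|\tilde g(s)\|_{\mathcal H}\,ds\big)$, and by Cauchy--Schwarz with \eqref{Cstar} and \eqref{well-posedness} one gets $P\le c\,\rho$. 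For $\rho$ small this forces $P<C_\rho$, contradicting $\|U(T_1)\|_{\mathcal H}=C_\rho$. Therefore $\|U(t)\|_{\mathcal H}<C_\rho$ throughout $[0,T_{\max})$; boundedness excludes blow-up, so $T_{\max}=+\infty$ and (W) is established.

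With (W) in hand, Theorem \ref{generaleCV} at once yields \eqref{stimaesponenziale} for every datum obeying \eqref{well-posedness}, which is the precise content of \emph{sufficiently small initial data}. The main obstacle is the apparent circularity in the third paragraph: the decay estimate presupposes the very global bound $\|U\|_{\mathcal H}\le C_\rho$ that one is proving. It is dissolved by applying that estimate only on the sub-interval $[0,T_1]$ on which the bound holds by definition of $T_1$, and by securing the strict gap $P<C_\rho$. The subsidiary but essential point is the order of the quantifiers: $C_\rho$ must be chosen \emph{first}, so that $L(C_\rho)<\frac{\omega-\omega'}{M}$ makes the exponent positive, and only \emph{afterwards} may $\rho$ be taken small enough to secure simultaneously the coercivity of Lemma \ref{Lemma 2}, the first-interval bound below $C_\rho$, and the barrier gap $P<C_\rho$.
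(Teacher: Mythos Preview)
Your argument is correct and takes a genuinely different route from the paper. The paper proceeds by an iterative block scheme: it first fixes an integer $N$ so that \eqref{stimaN1} holds, then uses Lemmas \ref{stimaE} and \ref{Lemma 2} step by step to push the solution across $[0,N\tau]$ with the explicit bound $\|U(t)\|_{\mathcal H}\le C_\rho:=2\bar C^{1/2}(N\tau)\rho$, applies the Duhamel--Gronwall estimate on that block, and uses the choice of $N$ to show that the ``new data'' at $t=N\tau$ again satisfy \eqref{condiz_gen}, so the whole construction restarts. You instead freeze a small target radius $C_\rho$ first and run a continuous barrier/bootstrap: assuming a putative first contact time $T_1$, the very Duhamel--Gronwall computation of Theorem \ref{generaleCV}, which is legitimate on $[0,T_1]$ because $\|U\|_{\mathcal H}\le C_\rho$ there, yields $\|U(T_1)\|_{\mathcal H}\le P<C_\rho$ and closes the argument in one stroke. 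Your route is shorter and uses the energy Lemmas only to reach $t=\tau$ (indeed the barrier argument would already cover that interval too), whereas the paper's scheme leans more heavily on the energy structure and produces an explicit formula linking $C_\rho$ to $\rho$. Both approaches share the implicit hypothesis that $L(r)$ can be made small for small $r$, which the paper also invokes when it writes ``eventually choosing a smaller $\rho$'' to secure $L(C_\rho)<\frac{\omega-\omega'}{2M}$.
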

\begin{proof}
Let us fix $N\in\nat$  such that
\begin{equation}
\label{stimaN1}
2M^2\Bigl( 1+e^{2\omega\tau}C^*\Bigr)e^{2\gamma}e^{-(\omega -\omega ')(N-1)\tau} < \frac{1}{1+e^{\omega\tau } b^2 C^*},
\end{equation}
where $C^*$ is the constant defined in \eqref{Cstar}.
Then, let $\rho$ be a positive constant such that
\begin{equation}
\label{rho}
\rho \leq \frac{(1-\tilde{\mu})^\frac{1}{2}}{2\bar{C}^\frac{1}{2}(N\tau)}h^{-1} \left(\frac{1-\tilde{\mu}}{2}\right).
\end{equation}
Now, let us assume that the initial data $(u_0(0), u_1, \eta_0)$ and $B^*u_t(s),\ s\in [-\tau, 0],$ satisfy the smallness assumption
\begin{equation}
\label{smallness_condition}
\begin{array}{l}
\displaystyle{(1-\tilde{\mu})||A^\frac{1}{2}u_0(0)||_H^2+||u_1||_H^2+\int_{-\tau}^0 |k(s+\tau)| ||B^*u_t(s)||_H^2 ds}\\
\hspace{2 cm}
\displaystyle{+
\int_0^{+\infty} \mu(s)||A^\frac{1}{2}\eta_0(s)||_H^2 ds<\rho^2.}
\end{array}
\end{equation}
Note that \eqref{smallness_condition} is equivalent to
\begin{equation}\label{condiz_gen}
\Vert U_0\Vert_{\mathcal H}^2 +\int_0^\tau \vert k(s)\vert \cdot \Vert \tilde g(s)\Vert^2_{\mathcal H} ds <\rho^2.
\end{equation}

From Lemma \ref{lemma1} we know that there exists a local solution defined on a time interval $[0,\delta)$, with $\delta \le \tau$.
From \eqref{smallness_condition} and \eqref{rho}  we have that
\begin{equation}\label{stimah}
\begin{array}{l}
\displaystyle{ h(||A^\frac{1}{2}u_0(0)||_H)< h\left(\frac{\rho}{(1-\tilde{\mu})^\frac{1}{2}}\right)\leq h\left (  \frac 1 {2\bar{C}^\frac{1}{2}(N\tau)}h^{-1} \left (  \frac{1-\tilde{\mu}}{2}\right ) \right )< \frac{1-\tilde{\mu}}{2},}
\end{array}
\end{equation}
where we have used the fact that $\bar{C}(N\tau)>1.$
This implies, from  Lemma \ref{Lemma 2}, $E(0)>0$. Furthermore, from \eqref{assumptionPsi} and \eqref{stimah} we get
$$
\begin{array}{l}
\displaystyle{
E(0)\le \frac{1}{2}||u_1||_H^2+\frac{3}{4}(1-\tilde{\mu})||A^\frac{1}{2}u_0(0)||_H^2+\frac{1}{2}\int_{-\tau}^0 |k(s+\tau)|\cdot ||B^*u_t(s)||_H^2 ds}\\
\hspace{2,9 cm}
\displaystyle{ +\frac{1}{2}\int_0^{+\infty} \mu(s)||A^\frac{1}{2}\eta_0(s)||_H^2 ds <\rho^2,}
\end{array}
$$
which gives, recalling \eqref{rho},
\begin{equation}\label{september1}
\displaystyle{ h\left( \frac{2}{(1-\tilde{\mu})^\frac{1}{2}}\bar{C}^\frac{1}{2}(N\tau)E^\frac{1}{2}(0) \right) < h\left(\frac{2}{(1-\tilde{\mu})^\frac{1}{2}}\bar{C}^\frac{1}{2}(N\tau)\rho \right) \leq h\left (h^{-1}\left (  \frac {1-\tilde\mu} 2 \right)\right )=\frac{1-\tilde{\mu}}{2}.}
\end{equation}
Since $\bar{C}(N\tau)\geq \bar{C}(\tau)$,
then
\begin{equation}\label{J1}
 h\left( \frac{2}{(1-\tilde{\mu})^\frac{1}{2}}\bar{C}^\frac{1}{2}(\tau)E^\frac{1}{2}(0) \right)
\le  h\left( \frac{2}{(1-\tilde{\mu})^\frac{1}{2}}\bar{C}^\frac{1}{2}(N\tau)E^\frac{1}{2}(0) \right)<\frac{1-\tilde{\mu}}{2}.
\end{equation}
So,
we can apply Lemma \ref{Lemma 2} and we obtain
\begin{equation*}
\begin{array}{l}
\displaystyle{ E(t)>\frac{1}{4}||u_t(t)||_H^2+\frac{1-\tilde{\mu}}{4}||A^\frac{1}{2}u(t)||_H^2}\\
\hspace{1 cm}
\displaystyle{ +\frac{1}{4}\int_{t-\tau}^t |k(s+\tau)| \cdot ||B^*u_t(s)||_H^2 ds+\frac{1}{4}\int_0^{+\infty} \mu(s) ||A^\frac{1}{2}\eta^t (s)||_H^2 ds,}
\end{array}
\end{equation*}
for all $t\in [0, \delta ).$
In particular we have that
\begin{equation*}
E(t)>\frac{1}{4}||u_t(t)||_H^2, \quad \mbox{for}\  t\in [0, \delta ).
\end{equation*}
Therefore, we can apply
Lemma \ref{stimaE}, obtaining
$$
E(t)\leq \bar{C}(\tau) E(0)< \bar{C}(\tau)\rho^2,
$$
for any $t\in [0,\delta ]$. Since
\begin{equation}\label{step}
\begin{array}{l}
\displaystyle{ 0<  \frac{1}{4}||u_t(t)||_H^2+\frac{1-\tilde{\mu}}{4}||A^\frac{1}{2}u(t)||_H^2}\\
\displaystyle{+\frac{1}{4}\int_{t-\tau}^t |k(s+\tau)| \cdot ||B^*u_t(s)||_H^2 ds+\frac{1}{4}\int_0^{+\infty} \mu(s) ||A^\frac{1}{2}\eta^t (s)||_H^2 ds\leq E(t)\leq \bar{C}(\tau) E(0),}
\end{array}
\end{equation}
for all $t\in [0,\delta]$, then we can extend the solution to the entire interval $[0,\tau]$.

Now, observe that from \eqref{step} and \eqref{J1} we have
\begin{equation}
\label{stima a tau}
\begin{array}{l}
\displaystyle{h(||A^{\frac{1}{2}} u(\tau)||_H)\leq h\left(\frac{2}{(1-\tilde{\mu})^\frac{1}{2}}\bar{C}^\frac{1}{2}(\tau) E^\frac{1}{2}(0) \right)<\frac{1-\tilde{\mu}}{2}.}
\end{array}
\end{equation}
By continuity, \eqref{stima a tau}   implies that there exists $\delta '>0$ such that
$$
h(||A^\frac{1}{2}u(t)||_H)<\frac{1-\tilde{\mu}}{2}, \qquad \forall t \in [\tau,\tau+\delta ').
$$
From this, arguing as before, we deduce
$$
\begin{array}{l}
\displaystyle{ E(t)>\frac{1}{4}||u_t(t)||_H^2+\frac{1-\tilde{\mu}}{4}||A^\frac{1}{2}u(t)||_H^2+\frac{1}{4}\int_{t-\tau}^t |k(s+\tau)|\cdot ||B^*u_t(s)||_H^2 ds}\\
\hspace{2 cm}
\displaystyle{ +\frac{1}{4}\int_0^{+\infty} \mu(s) ||A^\frac{1}{2}\eta^t(s)||_H^2 ds,}
\end{array}$$
for any $t\in [\tau,\tau+\delta')$. In particular, also in such an interval we have $E(t)>\frac{1}{4}||u_t(t)||_H^2$. Hence, we can apply again Lemma \ref{stimaE} on the time interval $[0, \tau +\delta ')$ obtaining
$$
0<E(t)\leq \bar{C}(2\tau) E(0).$$
As before, we can then extend the solution the whole interval $[0,2\tau]$. At time $t=2\tau$ we have that
$$
\displaystyle{ h(\Vert A^{\frac 12} u(2\tau)\Vert_H)\le h\left(\frac{2}{(1-\tilde{\mu})^\frac{1}{2}}E^\frac{1}{2}(2\tau)\right) \le h\left(\frac{2}{(1-\tilde{\mu})^\frac{1}{2}}\bar{C}^\frac{1}{2}(2\tau)E^\frac{1}{2}(0)\right) <\frac{1-\tilde{\mu}}{2},}
$$
where we have used \eqref{J1}. Moreover, if $3\le N,$
$$
h\left(\frac{2}{(1-\tilde{\mu})^\frac{1}{2}}\bar{C}^\frac{1}{2}(3\tau)E^\frac{1}{2}(0)\right) \le h\left(\frac{2}{(1-\tilde{\mu})^\frac{1}{2}}\bar{C}^\frac{1}{2}(N\tau)E^\frac{1}{2}(0)\right)<\frac{1-\tilde{\mu}}{2}.
$$
Thus, one can repeat again the same argument.
By iteration, we then find a unique solution to the problem \eqref{forma_astratta2} on the interval $[0,
N\tau]$, where $N$ is the natural number fixed at the beginning of the proof.
Moreover, the definition \eqref{rho} of $\rho,$ ensures that
\begin{equation*}
h(\Vert A^{\frac 12} u(N\tau)\Vert_H)\le
h\left(\frac{2}{(1-\tilde{\mu})^\frac{1}{2}}E^\frac{1}{2}(N\tau)\right) \le  h \left (   \frac{2}{(1-\tilde{\mu})^\frac{1}{2}}\bar{C}^\frac{1}{2}(N\tau)E^\frac{1}{2}(0) \right )<\frac{1-\tilde{\mu}}{2},
\end{equation*}
where we have used \eqref{september1}.
Note that, by construction, \eqref{stima E dal basso} and \eqref{J2} are satisfied in the whole $[0, N\tau ).$ Then, from \eqref{J2},
$$
\begin{array}{l}
||U(t)||_\mathcal{H}^2\le 4E(t)\leq 4\bar{C}(N\tau)E(0)<4\bar{C}(N\tau)\rho^2
\end{array}
$$
and so
$$||U(t)||_\mathcal{H}\le 2\bar{C}^\frac{1}{2}(N\tau)\rho, \quad \forall\ t\in [0, N\tau].
$$
Thus,
we have proved that, under the assumption \eqref{condiz_gen} on the initial data, there exists a unique solution $U(\cdot)$ to problem \eqref{forma_astratta2} defined on the time interval $[0, N\tau ].$
Moreover,
\begin{equation*}
||U(t)||_\mathcal{H} \le C_\rho :=2\bar{C}^\frac{1}{2}(N\tau)\rho.
\end{equation*}
So far we have fixed
$\rho$  satisfying \eqref{rho}; now, eventually choosing a smaller $\rho,$ we assume  that $\rho $ satisfies the additional assumption
\begin{equation*}
L(C_\rho)=L(\bar{C}^\frac{1}{2}(N\tau)\rho )<\frac{\omega-\omega'}{2M}.
\end{equation*}
Then, the well--posedness assumption (W) of Theorem \ref{generaleCV} is satisfied on $[0, N\tau ].$ Therefore, we obtain that $U$ satisfies the exponential decay estimate \eqref{stimaesponenziale} and then
\begin{equation}\label{stimaN}
||U(t)||_{\mathcal H}\leq M\left( ||U(0)||_{\mathcal H}+\int_0^\tau |k(s)| e^{\omega s} ||\tilde g(s)||_{\mathcal H} ds \right) e^\gamma e^{-\frac{\omega -\omega'}{2} t}, \quad\forall\  t\in [0, N\tau ].
\end{equation}
In particular,
\begin{equation}\label{stimaN2}
||U(N\tau)||_{\mathcal H}\leq M\left( ||U(0)||_{\mathcal H}+\int_0^\tau e^{\omega s}|k(s)| \cdot ||\tilde g(s)||_{\mathcal H} ds \right) e^\gamma e^{-\frac{\omega -\omega'}{2} N\tau}.
\end{equation}
Now,  observe that
\begin{equation*}
\int_0^\tau e^{\omega s}|k(s)| \cdot ||\tilde g(s)||^2_{\mathcal H} ds \leq e^{\omega \tau}  \Biggl(\int_0^\tau |k(s)|ds \Biggr)^{1/2} \Biggl( \int_0^\tau |k(s)| \cdot ||\tilde g(s)||^2_{\mathcal H} ds\Biggr) ^{1/2}.
\end{equation*}
Hence,
\begin{equation*}
\displaystyle{ ||U(t)||_{\mathcal H} \leq M
\rho \left ( 1+ e^{\omega\tau }{C^*}^{1/2}  \right ) e^{\gamma} e^{-\frac{\omega -\omega'}{2} t}},\quad \forall \ t\in [0, N\tau],
\end{equation*}
and then
\begin{equation}\label{WW1}
\displaystyle{ ||U(t)||^2_{\mathcal H} \leq 2 M^2
\rho^2  \left ( 1+ e^{2\omega\tau }{C^*}  \right ) e^{2\gamma} e^{-(\omega -\omega') t}}, \quad \forall \ t\in [0, N\tau],
\end{equation}
where $C^*$ is the constant defined in \eqref{Cstar}.
From \eqref{WW1} we deduce
\begin{equation}\label{quasi}
\begin{array}{l}
\displaystyle{
\Vert U(N\tau )\Vert^2_{\mathcal H} +\int_{N\tau}^{(N+1)\tau} e^{\omega (s-N\tau ) }\vert k(s)\vert\cdot  \Vert B^*u_t(s-\tau )\Vert^2_{H} \, ds
\quad\quad }\\
\displaystyle{
\le 2 M^2
\rho^2  \left ( 1+ e^{2\omega\tau }{C^*}  \right ) e^{2\gamma} e^{-(\omega -\omega') N\tau}
+e^{\omega\tau} b^2 \int_{N\tau}^{(N+1)\tau} \vert k(s)\vert\cdot  \Vert U(s-\tau)\Vert^2_{\mathcal H}\, ds.
}
\end{array}
\end{equation}

Now, observe that, for $s\in [N\tau , (N+1)\tau ],$ it results $s-\tau \in [(N-1)\tau , N\tau ],$ then from \eqref{WW1} we deduce
$$ \Vert U(s-\tau  )\Vert^2_{\mathcal H}\le  2 M^2
\rho^2  \left ( 1+ e^{2\omega\tau }{C^*}  \right ) e^{2\gamma} e^{-(\omega -\omega') (N-1)\tau }, \quad \forall \ s\in [N\tau , (N+1)\tau ].$$
This last estimate, used in \eqref{quasi}, gives
\begin{equation}\label{quasi2}
\begin{array}{l}
\displaystyle{
\Vert U(N\tau )\Vert^2_{\mathcal H} +\int_{N\tau}^{(N+1)\tau} e^{\omega (s-N\tau ) }\vert k(s)\vert\cdot  \Vert B^*u_t(s-\tau )\Vert^2_{H} \, ds
\quad\quad }\\
\displaystyle{
\le 2 M^2
\rho^2  \left ( 1+ e^{2\omega\tau }{C^*}  \right ) e^{2\gamma} e^{-(\omega -\omega') (N-1)\tau}
\left ( 1 +e^{\omega\tau }b^2 C^*  \right ).
}
\end{array}
\end{equation}

From \eqref{quasi2} and \eqref{stimaN1}, we then  deduce
\begin{equation*}
||U(N\tau)||^2_{\mathcal H}+\int_{N\tau}^{(N+1)\tau} |k(s)| \cdot ||B^*u_t(s-\tau )||_H^2 ds <\rho^2.
\end{equation*}
Thus (cf. with \eqref{condiz_gen}), one can argue as before on the interval $[N\tau, 2 N\tau ]$ obtaining a solution on $[0, 2N\tau].$ Iterating this procedure we get a global solution satisfying
$$\Vert U(t)\Vert_{\mathcal H}<C_\rho.$$
Therefore, we have showed
that the problem \eqref{forma_astratta2} satisfies the well-posedness assumption (W) of Theorem \ref{generaleCV}.
\end{proof}

We have then proved that, for suitably small data, solutions to problem  \eqref{forma_astratta2} are globally defined and their energies satisfy an exponential decay estimate. Therefore, one can state the following theorem.
\begin{Theorem}
Let us consider \eqref{equazione_riscritta}. Then, there exists $\delta >0$ such that
if
\begin{equation}\label{ipotesi}
(1-\tilde{\mu}) ||A^{\frac{1}{2}}u_0||_H^2+||u_1||_H^2+\int_0^{+\infty} \mu(s) ||A^{\frac{1}{2}}\eta _0||_H^2 ds+\int_0^\tau |k(s)| \cdot ||Bg(s-\tau)||_H^2 ds <\delta,
\end{equation}
then the solution $u$ is globally defined and it satisfies
\begin{equation}
\label{tesi}
E(t)\leq Ce^{-\beta t},
\end{equation}
where $C$ is a constant depending only on the initial data and $\beta>0$.
\end{Theorem}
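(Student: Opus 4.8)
The plan is to read this final theorem as the translation, back to the original variables $(u,u_t,\eta^t)$ and to the physical energy $E$, of the two abstract results already proved: Theorem~\ref{teorema2.5} (global existence together with the validity of the well-posedness assumption (W)) and Theorem~\ref{generaleCV} (exponential decay of $\|U(t)\|_{\mathcal H}$). The first thing I would do is match the smallness conditions. Since $\tilde g(s)=(0,Bg(s-\tau),0)$ one has $\|\tilde g(s)\|_{\mathcal H}^2=\|Bg(s-\tau)\|_H^2$, and $\|U_0\|_{\mathcal H}^2=(1-\tilde\mu)\|A^{\frac12}u_0\|_H^2+\|u_1\|_H^2+\int_0^{+\infty}\mu(s)\|A^{\frac12}\eta_0\|_H^2\,ds$, so hypothesis \eqref{ipotesi} is exactly \eqref{condiz_gen} with $\delta=\rho^2$. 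I would therefore set $\delta:=\rho^2$, with $\rho$ the radius produced in the proof of Theorem~\ref{teorema2.5}, already shrunk so that $L(C_\rho)<\frac{\omega-\omega'}{2M}$; this guarantees that the exponent $\kappa:=\omega-\omega'-ML(C_\rho)$ is strictly positive.

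With this choice \eqref{ipotesi} forces \eqref{condiz_gen}, hence Theorem~\ref{teorema2.5} applies: the solution $U=(u,u_t,\eta^t)$ is global, the assumption (W) holds, and Theorem~\ref{generaleCV} yields
$$\|U(t)\|_{\mathcal H}\le \tilde M\Big(\|U_0\|_{\mathcal H}+\int_0^\tau e^{\omega s}|k(s)|\,\|\tilde g(s)\|_{\mathcal H}\,ds\Big)e^{-\kappa t}=:\Lambda\, e^{-\kappa t},\qquad t\ge 0,$$
where $\Lambda$ depends only on the initial data. The remaining task is to pass from this decay of the abstract norm to the claimed decay \eqref{tesi} of the energy \eqref{energia}.

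The delicate point is that $E(t)$ contains two contributions absent from $\|U(t)\|_{\mathcal H}^2$, and both must be shown to decay at the same rate. First, the source term $-\psi(u)$: using \eqref{assumptionPsi} together with the uniform bound $h(\|A^{\frac12}u(t)\|_H)<\frac{1-\tilde\mu}{2}$ valid along the whole trajectory (established in the proof of Theorem~\ref{teorema2.5}), I get $|\psi(u(t))|\le\frac{1-\tilde\mu}{4}\|A^{\frac12}u(t)\|_H^2$, which is controlled by $\|U(t)\|_{\mathcal H}^2$. Second, and this is the real obstacle, the delay-history integral $\frac12\int_{t-\tau}^t|k(s+\tau)|\,\|B^*u_t(s)\|_H^2\,ds$ involves the velocity over the window $[t-\tau,t]$ and is not part of $\|U(t)\|_{\mathcal H}^2$. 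Here I would estimate $\|B^*u_t(s)\|_H^2\le b^2\|u_t(s)\|_H^2\le b^2\|U(s)\|_{\mathcal H}^2$, insert the bound $\|U(s)\|_{\mathcal H}^2\le\Lambda^2 e^{-2\kappa s}\le\Lambda^2 e^{2\kappa\tau}e^{-2\kappa t}$ valid for $s\ge t-\tau$, pull the resulting factor out of the integral, and use $\int_{t-\tau}^t|k(s+\tau)|\,ds=\int_t^{t+\tau}|k(\sigma)|\,d\sigma<C^*$ from \eqref{Cstar}. Thus this term is bounded by $b^2 C^*\Lambda^2 e^{2\kappa\tau}e^{-2\kappa t}$.

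Collecting the three pieces—the quadratic gradient/velocity/memory terms making up $\|U(t)\|_{\mathcal H}^2$, the $\psi$-term, and the delay-history term—each of order $e^{-2\kappa t}$, I obtain $E(t)\le C e^{-\beta t}$ with $\beta:=2\kappa=2(\omega-\omega'-ML(C_\rho))>0$ and $C$ depending only on the data through $\Lambda$, which is exactly \eqref{tesi}. I expect the only nonroutine step to be the treatment of the delay-history integral: it is the boundedness \eqref{Cstar} of the local $L^1$-norm of $k$ that converts the decay of $\|U\|_{\mathcal H}$ on the preceding time window into decay of that extra energy term at the same rate, so that no separate argument for the delayed velocity is needed.
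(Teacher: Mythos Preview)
Your proposal is correct and follows essentially the same route as the paper: invoke Theorems~\ref{teorema2.5} and~\ref{generaleCV} to obtain exponential decay of $\|U(t)\|_{\mathcal H}$, then bound the extra delay-history integral in $E(t)$ by combining $\|B^*u_t(s)\|_H^2\le b^2\|U(s)\|_{\mathcal H}^2$ with the uniform bound \eqref{Cstar}. You are in fact a bit more careful than the paper, which does not explicitly mention the $-\psi(u)$ term; your use of \eqref{assumptionPsi} and the persistent bound $h(\|A^{1/2}u(t)\|_H)<\tfrac{1-\tilde\mu}{2}$ to absorb it into $\|U(t)\|_{\mathcal H}^2$ is the natural way to fill that small gap.
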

\begin{proof}
By a direct application of Theorem \ref{generaleCV} and Theorem \ref{teorema2.5} we have that there exist $K, \gamma >0$ such that
\begin{equation}
\label{cond1}
||U(t)||_\mathcal{H}^2 \leq Ke^{-\gamma t},
\end{equation}
for all $t\geq 0$, if the initial data are suitably small. Now, observe that there exists a constant $\overline{C}>0$ such that
\begin{equation}
\label{cond2}
\int_{t-\tau}^t |k(s)|\cdot ||B^* u_t(s)||_H^2 ds \leq \overline{C}C^* e^{-\gamma (t-\tau)}.
\end{equation}
Then, from \eqref{cond1} and \eqref{cond2} we obtain \eqref{tesi}.
\end{proof}

\section{Examples}
\subsection{The wave equation with memory and source term}
Let $\Omega$ be a non-empty bounded set in $\RR^n$, with boundary $\Gamma$ of class $C^2$. Moreover, let $\mathcal{O}\subset \Omega$ be a nonempty open subset of $\Omega$. We consider the following wave equation:
\begin{equation}
\label{memory+source}
\begin{array}{l}
\displaystyle{u_{tt}(x,t)-\Delta u(x,t)+\int_0^{+\infty} \mu(s)\Delta u(x,t-s) ds+k(t)\chi _{\mathcal{O}} u_t(x,t-\tau)}\\
\hspace{7 cm}
\displaystyle{=|u(x,t)|^\sigma u(x,t), \qquad \text{in} \ \Omega \times (0,+\infty),}\\
\displaystyle{ u(x,t)=0, \qquad \text{in} \ \Gamma \times (0,+\infty),}\\
\displaystyle{ u(x,t)=u_0(x,t) \qquad \text{in} \ \Omega \times (-\infty,0],}\\
\displaystyle{ u_t(x,0)=u_1(x), \qquad \text{in} \ \Omega,} \\
\displaystyle{ u_t(x,t)=g(x,t), \qquad \text{in} \ \Omega \times (-\tau,0],}
\end{array}
\end{equation}
where $\tau >0$ is the time-delay, $\mu :(0,+\infty)\rightarrow (0,+\infty)$ is a locally absolutely continuous memory kernel, which satisfies the assumptions (i)-(iv), $\sigma >0$ and the damping coefficient $k(\cdot)$ is a function in $L^1_{loc}([0,+\infty))$ satisfying \eqref{Cstar}.
Then, system \eqref{memory+source} falls in the form \eqref{equazione_generale} with $A=-\Delta$ and $D(A)=H^2(\Omega)\cap H^1_0(\Omega).$

Defining $\eta_s^t$ as in \eqref{eta}, we can rewrite the system \eqref{memory+source} in the following way:
\begin{equation}
\label{memory+source_riscritta}
\begin{array}{l}
\displaystyle{ u_{tt}(x,t)-(1-\tilde{\mu})\Delta u(x,t)-\int_0^{+\infty} \mu(s)\Delta \eta^t(x,s) ds+k(t)\chi _{\mathcal{O}} u_t(x,t-\tau)}\\
\hspace{7 cm}
\displaystyle{=|u(x,t)|^\sigma u(x,t), \quad \text{in} \ \Omega\times (0,+\infty),}\\
\displaystyle{\eta_t^t(x,s)=-\eta_s^t(x,s)+u_t(x,t), \qquad \text{in} \ \Omega\times (0,+\infty)\times (0,+\infty),}\\
\displaystyle{ u(x,t)=0, \qquad \text{in} \ \Gamma \times (0,+\infty),}\\
\displaystyle{ \eta^t(x,s)=0, \qquad \text{in} \ \Gamma \times (0,+\infty), \quad \text{for} \ t\geq 0,}\\
\displaystyle{ u(x,0)=u_0(x):=u_0(x,0), \qquad \text{in} \ \Omega,}\\
\displaystyle{ u_t(x,0)=u_1(x):=\frac{\partial u_0}{\partial t}(x,t)\Bigr| _{t=0}, \qquad \text{in} \ \Omega,}\\
\displaystyle{ \eta^0(x,s)=\eta_0(x,s):=u_0(x,0)-u_0(x,-s), \qquad \text{in} \ \Omega\times (0,+\infty),}\\
\displaystyle{ u_t(x,t)=g(x,t), \qquad \text{in} \ \Omega\times (-\tau,0).}

\end{array}
\end{equation}
As before, we introduce the Hilbert space $L^2_\mu ((0,+\infty);H^1_0(\Omega))$ endowed with the inner product
$$
\langle \phi , \psi\rangle _{L^2_\mu ((0,+\infty);H^1_0(\Omega))} := \int_{\Omega} \left( \int_0^{+\infty} \mu(s) \nabla \phi (x,s) \nabla \psi (x,s) dx \right) ds,
$$
and consider the Hilbert space
$$
\mathcal{H}=H_0^1(\Omega)\times L^2(\Omega) \times L^2_\mu ((0,+\infty);H^1_0(\Omega)),
$$
equipped with the inner product
\begin{equation*}
\left\langle
\left (
\begin{array}{l}
u\\
v\\
w
\end{array}
\right ),
\left (
\begin{array}{l}
\tilde u\\
\tilde v\\
\tilde w
\end{array}
\right )
\right\rangle_{\mathcal{H}}:=(1-\tilde{\mu}) \int_{\Omega} \nabla u \nabla \tilde u dx +\int_{\Omega} v\tilde v dx+ \int_{\Omega} \int_0^{+\infty} \mu(s) \nabla w \nabla \tilde w ds dx.
\end{equation*}
Setting $U=(u,u_t,\eta^t)$, we can rewrite \eqref{memory+source_riscritta} in the form \eqref{forma_astratta2}, where
$$
\mathcal{A} \begin{pmatrix}
u\\
v\\
w
\end{pmatrix}
=
\begin{pmatrix}
v\\
(1-\tilde{\mu})\Delta u+\int_0^{+\infty} \mu(s) \Delta w(s) ds \\
-w_s +v
\end{pmatrix},
$$
with domain
\begin{eqnarray*}
{D(A)}&=&\{  (u,v,w)\in H_0^1(\Omega)\times H_0^1(\Omega)\times L^2_\mu ((0,+\infty);H^1_0(\Omega)): \\
& & (1-\tilde{\mu})u +\int_0^{+\infty} \mu(s)w(s)ds \in H^2(\Omega)\cap H_0^1(\Omega), \ w_s\in L^2_\mu ((0,+\infty);H^1_0(\Omega))\} ,
\end{eqnarray*}
$\mathcal{B}(u,v,\eta^t)^T := (0,\chi _{\mathcal{O}} v,0)^T$ and $F(U(t))=(0,|u(t)|^{\sigma}u(t), 0)^T$. For any $u\in H_0^1(\Omega)$ consider the functional
$$
\psi (u):= \frac{1}{\sigma +2} \int_{\Omega} |u(x)|^{\sigma+2} dx.
$$
By Sobolev's embedding theorem, we know that if $0<\sigma< \frac{4}{n-2}$, then $\psi$ is well-defined, and G\^ateaux differentiable at any point $u\in H_0^1(\Omega)$, with G\^ateaux derivative given by
$$
D\psi (u)(v)=\int_{\Omega} |u(x)|^\sigma u(x)v(x)dx,
$$
for any $v\in H_0^1(\Omega)$. Moreover, as in \cite{ACS}, if $0<\sigma\le \frac{2}{n-2}$, then $\psi$ satisfies the assumptions (H1), (H2), (H3). Define the energy as follows:
$$
\begin{array}{l}
\displaystyle{E(t):= \frac{1}{2}\int_{\Omega} |u_t(x,t)|^2 dx +\frac{1-\tilde{\mu}}{2}\int_{\Omega} |\nabla u(x,t)|^2 dx -\psi(u(x,t))}\\
\hspace{2 cm}
\displaystyle{ +\frac{1}{2}\int_{t-\tau}^t \int_{\mathcal{O}} |k(s+\tau)|\cdot |u_t(x,s)|^2 dx ds +\frac{1}{2}\int_0^{+\infty} \mu (s)\int_{\Omega} |\nabla \eta^t(x,s)|^2 dx ds.}
\end{array}
$$
Theorem \ref{generaleCV} applies to this model giving well-posedness and exponential decay of the energy for suitably small initial data, provided that the condition \eqref{ipotesi2} on the time delay holds for every $t\geq 0$.

\subsection{The plate system with memory and source term}
Let $\Omega$ be a non-empty bounded set in $\RR^n$, with boundary $\Gamma$ of class $C^2$. Let us denote $\nu(x)$ the outward unit normal vector at any point $x\in\Gamma.$ Moreover, let $\mathcal{O}\subset \Omega$ be a nonempty open subset of $\Omega$. We consider the following viscoelastic plate system:
\begin{equation}
\label{memory+source+plate}
\begin{array}{l}
\displaystyle{u_{tt}(x,t)+\Delta^2 u(x,t)-\int_0^{+\infty} \mu(s)\Delta^2 u(x,t-s) ds+k(t)\chi _{\mathcal{O}} u_t(x,t-\tau)}\\
\hspace{7 cm}
\displaystyle{=|u(x,t)|^\sigma u(x,t), \qquad \text{in} \ \Omega \times (0,+\infty),}\\
\displaystyle{ u(x,t)=\frac {\partial u}{\partial\nu }(x,t)=0, \qquad \text{in} \ \Gamma \times (0,+\infty),}\\
\displaystyle{ u(x,t)=u_0(x,t) \qquad \text{in} \ \Omega \times (-\infty,0],}\\
\displaystyle{ u_t(x,0)=u_1(x), \qquad \text{in} \ \Omega,} \\
\displaystyle{ u_t(x,t)=g(x,t), \qquad \text{in} \ \Omega \times (-\tau,0],}
\end{array}
\end{equation}
where $\tau >0$ is the time-delay, $\mu :(0,+\infty)\rightarrow (0,+\infty)$ is a locally absolutely continuous memory kernel, which satisfies the assumptions (i)-(iv), $\sigma >0$ and the damping coefficient $k(\cdot)$ is a function in $L^1_{loc}([0,+\infty))$ satisfying \eqref{Cstar}.
This system again falls in \eqref{equazione_generale} for $A=\Delta^2$ with domain $D(A)= H^4(\Omega)\cap H^1_0(\Omega).$
Defining $\eta_s^t$ as in \eqref{eta}, we can rewrite the system \eqref{memory+source+plate} in the following way:
\begin{equation}
\label{memory+source_riscritta}
\begin{array}{l}
\displaystyle{ u_{tt}(x,t)+(1-\tilde{\mu})\Delta^2 u(x,t)+\int_0^{+\infty} \mu(s)\Delta^2 \eta^t(x,s) ds+k(t)\chi _{\mathcal{O}} u_t(x,t-\tau)}\\
\hspace{7 cm}
\displaystyle{=|u(x,t)|^\sigma u(x,t), \quad \text{in} \ \Omega\times (0,+\infty),}\\
\displaystyle{\eta_t^t(x,s)=-\eta_s^t(x,s)+u_t(x,t), \qquad \text{in} \ \Omega\times (0,+\infty)\times (0,+\infty),}\\
\displaystyle{ u(x,t)=\frac {\partial u}{\partial\nu }(x,t)= 0, \qquad \text{in} \ \Gamma \times (0,+\infty),}\\
\displaystyle{ \eta^t(x,s)=0, \qquad \text{in} \ \Gamma \times (0,+\infty), \quad \text{for} \ t\geq 0,}\\
\displaystyle{ u(x,0)=u_0(x):=u_0(x,0), \qquad \text{in} \ \Omega,}\\
\displaystyle{ u_t(x,0)=u_1(x):=\frac{\partial u_0}{\partial t}(x,t)\Bigr| _{t=0}, \qquad \text{in} \ \Omega,}\\
\displaystyle{ \eta^0(x,s)=\eta_0(x,s):=u_0(x,0)-u_0(x,-s), \qquad \text{in} \ \Omega\times (0,+\infty),}\\
\displaystyle{ u_t(x,t)=g(x,t), \qquad \text{in} \ \Omega\times (-\tau,0).}

\end{array}
\end{equation}
Then, arguing analogously to the previous example, one can recast \eqref{memory+source_riscritta} in the form \eqref{forma_astratta2}. Moreover, for $(n-4)\sigma\le 4$ (cf.  e.g. \cite{MK}) the nonlinear source satisfies the required assumptions.

Theorem \ref{generaleCV} applies then to this model giving well-posedness and exponential decay of the energy for suitably small initial data, provided that the condition \eqref{ipotesi2} on the time delay holds for every $t\geq 0$.

\end{document}